\documentclass[11pt]{amsproc}
\usepackage{tikz-cd} 
\usepackage{amsmath}
\usepackage{amsthm}
\usepackage{amsfonts}
\usepackage[normalem]{ulem}

\newtheorem{thm}{Theorem}

\newtheorem{lem}[thm]{Lemma}

\newtheorem{cor}[thm]{Corollary}
\newtheorem{exmp}[thm]{Example}
\newtheorem{rem}[thm]{Remark}

\newcommand{\Pset}{\mathcal{P}}
\newcommand{\D}{\mathcal{D}}

\newcommand{\U}{\mathcal{U}}
\newcommand{\Nil}{\mathcal{N}}
\newcommand{\cO}{\mathcal{O}}

\newcommand{\FF}{\mathbb{F}}

\DeclareMathOperator{\im}{Im}
\DeclareMathOperator{\rank}{rank}

\usetikzlibrary{tikzmark}
\newcounter{pairctr}

\usepackage{amssymb}
\usepackage{natbib}
\usepackage{graphicx}

\title{Jordan types commuting with a hook partition}

\author{Leila Khatami}
\address{Department of Mathematics, Union College, 807 Union St, Schenectady, NY 12308,
USA} 
\email{khatamil@union.edu}

\author{Toma\v z Ko\v sir}
\address{Faculty of Mathematics and Physics, University of Ljubljana, Jadranska 19, 1000 Ljubljana, Slovenia, and Institute of Mathematics, Physics and Mechanics, Jadranska 19, 1000 Ljubljana, Slovenia} 
\email{tomaz.kosir@fmf.uni-lj.si}

\date{\today}

\thanks{The work of the first author was partially supported by Faculty Research Fund from Union College. The second author acknowledges financial support from the Slovenian Research and Innovation Agency (by research program No. P1-0448 until December 31, 2025, and by research program No. P1-0222 and research project No. J1-50002 after January 1, 2026). }
\subjclass{Primary 15A27. Secondary 05A17, 13E10, 14A10.}%
\keywords{commuting matrices, Jordan type, hook partition.}

\begin{document}

\begin{abstract}

 We give a complete classification of the Jordan types occurring in the nilpotent commutator of a nilpotent matrix whose Jordan type is a hook partition. As a consequence, we also show that two partitions with the same generic commuting Jordan type need not commute with each other.
\end{abstract}

\maketitle

\section{Introduction}

The question of which partitions of a given natural number can occur as the Jordan types of two commuting nilpotent matrices has attracted substantial interest in recent decades. We refer the readers to surveys \cite{Kha-S,Kos-S} for more information. Since the nilpotent commutator of a nilpotent matrix with Jordan type $P$ is an irreducible variety (as proved by Basili in \cite[Lem. 2.3]{Bas-2}), its intersection with one of the nilpotent orbits is a dense subset. The partition corresponding to the dense orbit studied by Panyushev in \cite{Pan} for the first time, is denoted by $\mathcal{D}(P)$. It is a super-distinct or a Rogers-Ramanujan partition, that is, it is a partition with parts differing by at least $2$ \cite[Thm. 1.12]{BI}. The largest and the smallest part of $\D(P)$ were given by Oblak \cite{Obl-1} and Khatami \cite{Kha-2}, respectively. The map $\mathcal{D}$ is idempotent \cite[Thm. 6]{KO}. The authors of \cite[Conj. 4.11]{IKvSZ} formulated the Box Conjecture on the structure of the inverse image $\mathcal{D}^{-1}(Q)$ of a Rogers-Ramanujan partition $Q$. The conjecture was recently proved in \cite[Thm. 1]{IKM}. 

It is known that every nilpotent commutator intersects all nilpotent orbits of Jordan type $P=(2^k,1^l)$, where $k$ and $l$ are integers (see \cite[Thm. 2.4]{Obl-3} or \cite[Thm. 4.11]{BW}). Such partitions are called universally commuting. On the other hand, if the Jordan type of a matrix $B$ is a Rogers-Ramanujan partition then no other nilpotent orbit corresponding to a Rogers-Ramanujan partition intersects the nilpotent commutator of $B$ \cite[Thm. 5.3]{Kos-S}. 

The authors in \cite[Thm. 3]{BDKOS} described the nilpotent orbits that intersect the commutator of a nilpotent matrix $A$ of Jordan type $(n,n)$. In this paper, we consider a nilpotent matrix whose Jordan type is a hook partition $(n,1^m)$, where $n\ge 3$ and $m\ge 1$, and describe all the nilpotent orbits that intersect its nilpotent commutator. Note that then $\D(n,1^m)$ is a Rogers-Ramanujan partition with two parts. Let us also mention that the special case of our result for $m=1$ was proved by Britnell and Wildon in \cite[Prop. 4.9]{BW}. 

The paper is organized as follows. In the next section we introduce the notation and our setting, and in section \ref{hook} we present and prove our main results. We describe all partitions that commute with a hook partition $(n,1^m)$. In particular, we show that a necessary condition for a partition to commute with $(n,1^m)$ is to contain a subpartition that is an almost rectangular partition of either $n$, $n-1$ or $n-2$. In section \ref{consequences}, we discuss consequences of the main result and provide examples. Among these are examples that show that, in general, the hook partitions in the inverse image $\D^{-1}(Q)$ need not commute with every other partition in $\D^{-1}(Q)$.

\section{Preliminaries}

We assume that $\FF$ is an infinite field of characteristic $0$ and we denote by $\operatorname{Mat}_N(\FF)$ the set of all $N\times N$ matrices over $\FF$. 

We denote by $\Pset(N)$ the set of all partitions of a natural number $N$. A partition $P\in\Pset(N)$ is written as $P=(\lambda_1,\lambda_2,\ldots,\lambda_k)$, where $\lambda_i$ are arranged in nonincreasing order,  $\lambda_k> 0$ and $\sum_{i=1}^k \lambda_i=N$ . 

Suppose $B$ is a nilpotent matrix and $P=P_B=(\lambda_1,\lambda_2,\ldots,\lambda_k)\in\Pset(N)$ is its Jordan type, i.e., $\lambda_1\ge \lambda_2\ge\ldots\ge \lambda_k>0$ are sizes of its Jordan blocks. The nilpotent commutator $\Nil_B$ of $B$ is defined by 
$$\Nil_B=\{A\in \operatorname{Mat} _N(\FF):\,BA=AB,A^N=0\}.$$ 

We are interested in partitions $Q\in\Pset(N)$ such that the nilpotent orbit $\cO_{Q}$, that is the set of all nilpotent matrices of Jordan type $Q$, intersects $\Nil_B$ nontrivially, i.e., such that $\cO_{Q}\cap\Nil_B\neq\emptyset$. 

In connection with commuting Jordan types, two classes of partitions play an important role. A partition $Q=(\mu_1,\mu_2,\ldots,\mu_k)$ is called \emph{super-distinct or Rogers-Ramanujan} if its parts differ by at least $2$: $\mu_i-\mu_{i+1}\ge 2$ for $i=1,2,\ldots,k-1$.

On the opposite side, a partition $Q=(\mu_1,\mu_2,\ldots,\mu_k)$ is called \emph{almost rectangular}  if its largest and smallest parts differ by at most $1$: $\mu_1-\mu_{k}\le 1$. Note that if $J_N$ is the $N\times N$ Jordan nilpotent matrix with one Jordan block of size $N$, then the Jordan types of its powers $J_N^k$, $k=1,2,\ldots,N,$ are exactly all almost-rectangular partitions. We denote by $[N]^k$ the Jordan type of $J_N^k$.

\section{Commuting with a hook partition}\label{hook}

In this section, we characterize all possible Jordan types in the nilpotent commutator of a hook partition $\Gamma_{n,m}=(n,1^m)$. We write $N=n+m$. Let $B$ be the nilpotent Jordan matrix with Jordan type $\Gamma_{n,m}$, and let $\U_{B}$ be the maximal nilpotent subalgebra of the nilpotent commutator $\Nil_B$ as in \cite{BaIK2010}. Then $\U_B$ intersects every nilpotent orbit that intersects $\Nil_B$ \cite[Lem. 2.2]{BaIK2010}, and consists of all matrices of the following form in $\operatorname{Mat}_{N}(\mathbb{F})$, the space of all $N$ by  $N$ matrices over the field $\mathbb{F}$.

\begin{equation}\label{generic in U}
A=\left(\begin{array}{ccccc|ccccc}
    0 & h_1 & h_2 & \cdots & h_{n-1} & u_1 & u_2 & \cdots & u_{m-1} & u_{m}\\
    0 & 0   & h_1 & \cdots & h_{n-2} & 0   & 0   & \cdots & 0        & 0\\
    \vdots &     & \ddots &        & \vdots & \vdots & \vdots &        &        & \vdots\\
    0 & 0   & 0   & \cdots & h_1     & 0   & 0   & \cdots & 0        & 0\\ 
     0 & 0   & 0   & \cdots & 0     & 0   & 0   & \cdots & 0        & 0\\
    \hline
    0 & 0   & 0   & \cdots & d_1     & 0   & 0   & 0 & \cdots       & 0\\
    0 & 0   & 0   & \cdots & d_2     & v_{12}   &0 & 0& \cdots      &0\\
    0 & 0   & 0   & \cdots & d_3     & v_{13}& v_{23}& 0 & \ddots  & 0\\
    \vdots &     &        & \cdots & \vdots & \vdots &       & \ddots & \ddots & \vdots\\
    0 & 0   & 0   & \cdots & d_m     & v_{1m} & v_{2m} & \cdots & v_{m-1,m} & 0\\
\end{array}
\right)
\end{equation}

Let $\{e_1, \ldots, e_n\}$ be the standard basis of $\mathbb{F}^n$. Then we can write 
\begin{equation}\label{blocks of generic in U}
A=\left( 
\begin{array}{c|c}
  H   & e_1 u^T \\
  \hline
  d e_n^T   & V
\end{array}
\right),
\end{equation}
where $H=\sum\limits_{i=1}^{n-1}h_iJ_n^i$, vectors $u$ and $d$ are both in $\mathbb{F}^m$, and $V$ is a strictly lower triangular matrix in $\operatorname{Mat}_{m}(\mathbb{F})$.

\begin{lem}\label{commute with hook}
    The hook partition $\Gamma_{n,m}=\left(n,1^m\right)$ commutes with any partition $Q$ of $m+n$ having one of the following forms:
    \begin{enumerate}
        \item[(a)] $Q$ is the partition obtained from $\left(\mu, [n]^k\right)$, where $\mu$ is an arbitrary partition of $m$ and $1\leq k\leq n$.
        \item[(b)] $Q$ is the partition obtained from $\left(\mu,[n-1]^k\right)$, where $\mu=(\mu_1,\ldots, \mu_s)$ is a partition of $m+1$, and $\frac{n-1}{\mu_1}\leq k \leq n-1$.
        \item[(c)] $Q$ is the partition obtained from $\left(\mu, [n-2]^k\right)$, where either $\mu=(m+2)$ and $\frac{n-1}{m+1}< k \leq n-2$, or $\mu=(\mu_1, \mu_2, \ldots, \mu_s)$ is a partition of $m+2$ such that $\mu_2\geq 2$, and $\frac{n-1}{\mu_2}< k \leq n-2$.
        \end{enumerate}
\end{lem}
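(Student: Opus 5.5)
The approach is constructive. For each partition $Q$ listed in (a)--(c), I will write down an explicit matrix $A\in\U_B$ of the form \eqref{blocks of generic in U} and show that its Jordan type is $Q$. Since $\U_B\subseteq\Nil_B$, this is enough.

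Throughout, let $f_1,\dots,f_m$ be the standard basis of the second summand $\FF^m$. In these coordinates $A$ acts as follows:
\[
e_j\mapsto He_j\ (j<n),\qquad e_n\mapsto He_n+d,\qquad w\mapsto Vw+(u^Tw)e_1 \quad (w\in\FF^m).
\]

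\textbf{Case (a).} Put $H=J_n^k$, $u=0$ and $d=0$. Then $A=J_n^k\oplus V$ is block diagonal. Every partition $\mu$ of $m$ is the Jordan type of some strictly lower triangular $V$: take the lower-triangular Jordan matrix with blocks of sizes $\mu_i$. The Jordan type of $J_n^k$ is $[n]^k$, so the Jordan type of $A$ is $(\mu,[n]^k)$.

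\textbf{Case (b).} The vector $e_n$ is the only vector whose image can leave $\langle e_1,\dots,e_n\rangle$, and $e_1$ is the only vector that can be reached from $\FF^m$. So I will build one long chain running through both.

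Choose $V$ so that $f_1\mapsto f_2\mapsto\cdots\mapsto f_{\mu_1-1}\mapsto 0$ and so that $V$ has Jordan blocks $\mu_2,\dots,\mu_s$ on $f_{\mu_1},\dots,f_m$. Set $d=f_1$, $u=f_{\mu_1-1}$ (so that $u^Tf_{\mu_1-1}=1$), and $H=J_n^k$. Then
\[
e_n\mapsto e_{n-k}+f_1\mapsto e_{n-2k}+f_2\mapsto\cdots.
\]
The intended effect is that $e_n,f_1,\dots,f_{\mu_1-1},e_1$ together form a chain of length $\mu_1+1$. The vectors $e_1,\dots,e_{n-1}$ then carry the strings of $J_n^k$ restricted to them, which have type $[n-1]^k$, and these strings get "absorbed" into the long chain as needed.

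To verify this, I will compute $\rank A^j$ for all $j$. The $e$-component of $A^je_n$ is $e_{n-jk}$, while the $f/e_1$-component runs down the $\mu_1$-chain. The hypothesis $(n-1)/k\le\mu_1$ says exactly that every part $\lceil (n-1)/k\rceil$ of $[n-1]^k$ is at most $\mu_1$. Under this hypothesis the $H$-tail of $A^je_n$ dies no later than the chain tail, so the rank count comes out as that of $(\mu,[n-1]^k)$.

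The cleanest way to make this rigorous is probably an explicit change of basis. I would replace $e_n$ by $e_n':=e_n-(\text{suitable combination of the } e_{n-ik})$ so as to split off a direct summand. The alternative is to compare $\rank A^j$ with the conjugate-partition formula directly.

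\textbf{Case (c).} Here both $e_n$ and $e_1$ are used as ends of chains, but of two different chains built inside $\FF^m$: a chain of length $\mu_1$ and a chain of length $\mu_2$. This is why $[n-2]^k$ appears, living on $e_2,\dots,e_{n-1}$. I set $d$ to be the top of the $\mu_1$-chain, and $u$ to pick up the bottom of the $\mu_2$-chain into $e_1$; the remaining blocks of $V$ give $\mu_3,\dots,\mu_s$. In the exceptional case $\mu=(m+2)$ there is only one chain, and $e_n$ and $e_1$ are its two ends. The strict inequalities on $k$ should again be exactly what guarantees that the $H$-strings never outlive the relevant chain, which is now the $\mu_2$-chain or the $(m+1)$-chain.

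\textbf{Main obstacle.} The hard step is the rank/kernel computation in (b) and (c). One has to show that the interaction between $H=J_n^k$ and the chain through $d,V,u$ produces exactly the claimed Jordan type, and does not produce a merged longer block. The bounds on $k$ must enter precisely there. I would first test small cases, such as $n=5$ and $k=2$, to pin down the right choice of $d$ and $u$. It may be better to choose $d$ and $u$ adapted to where the $H$-strings end rather than the choice above, for instance $H=J_n^k$ with $u$ placed at the appropriate $f$. With that choice fixed, I would then find the explicit basis change that exhibits $A$ as a direct sum.
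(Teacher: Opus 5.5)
Your part (a) is the paper's. Your part (c) is also exactly the paper's construction, provided you read it with $V$ of type $(\mu_1-1,\mu_2-1,\mu_3,\dots,\mu_s)$ (so $e_n$ and $e_1$ supply the missing vectors of the $\mu_1$- and $\mu_2$-chains), $d=f_1$, and $u$ the last vector of the second $V$-block with coefficient $1$.

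Part (b), however, fails as written. With $f_1\mapsto\cdots\mapsto f_{\mu_1-1}\mapsto 0$, $d=f_1$, $u=f_{\mu_1-1}$ and $H=J_n^k$, you get $A^je_n=e_{n-jk}+f_j$ for $1\le j<\mu_1$, and
\[
A^{\mu_1}e_n=e_{n-k\mu_1}+e_1 \qquad (e_i:=0 \text{ for } i\le 0).
\]
This equals $e_1$ if $k\mu_1\ge n$ and $2e_1$ if $k\mu_1=n-1$; these are the only two cases allowed by $k\ge (n-1)/\mu_1$. So $A^{\mu_1}\ne 0$, and $A$ has a Jordan block of size at least $\mu_1+1$. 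But every part of $(\mu,[n-1]^k)$ is at most $\mu_1$, since $\lceil (n-1)/k\rceil\le\mu_1$. Hence your matrix never has the required type, for any admissible $k$, and no change of basis can repair this. The length-$(\mu_1+1)$ chain $e_n,f_1,\dots,f_{\mu_1-1},e_1$ that you call the ``intended effect'' is precisely what must be prevented. The smallest instance is $n=3$, $m=1$, $\mu=(2)$, $k=1$: you get $e_3\mapsto e_2+f_1\mapsto 2e_1$, type $(3,1)$ instead of $(2,2)$. Your claim that the $H$-tail of $A^je_n$ dies no later than the chain is also false at the boundary $k\mu_1=n-1$, where the $H$-component of $A^{\mu_1}e_n$ is still $e_1$.

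The missing idea is that $e_n$ should head a chain of length exactly $\mu_1$, while $e_1,\dots,e_{n-1}$ carry $J_{n-1}^k$. Here $e_n$ is the extra vector that makes $\mu$ a partition of $m+1$, and the $u$-term serves only as a correction. The paper takes $u=0$ when $k\mu_1\ge n$, and $u=-f_{\mu_1-1}$ when $k\mu_1=n-1$, so that $A^{\mu_1}e_n=e_1-e_1=0$. The case $\mu=(1^{m+1})$, where $f_{\mu_1-1}$ does not exist, is handled by $A=0$. Reorder the basis so that $e_n,f_1,\dots,f_{\mu_1-1}$ form the first block of $J_\mu^T$. Then $A$ is block upper triangular with diagonal blocks $J_{n-1}^k$ and $J_\mu^T$. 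The explicit formula for $A^l$ shows its corner is either zero or removable by row and column operations against pivots in $J_{n-1}^{kl}$ and $(J_\mu^T)^l$, so $\rank A^l=\rank J_{n-1}^{kl}+\rank J_\mu^l$ for all $l$.

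For (c) you still owe the analogous rank check. The strict inequality $k>(n-1)/\mu_2$ (resp.\ $k>(n-1)/(m+1)$) is used exactly to guarantee that whenever the corner terms of $A^l$ are nonzero, i.e.\ $kl\le n-1$, one has $l<\mu_2$ (resp.\ $l\le m$). Then, after conjugating so that $e_n$ heads the $\mu_1$-block and $e_1$ ends the $\mu_2$-block of $J_\mu^T$, the $(l+1)$-st row and the $(\mu_1+\mu_2-l)$-th column of $(J_\mu^l)^T$ serve as pivots to clear those corner terms.
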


\begin{proof}
Throughout this proof, we use the notations established in (\ref{generic in U}) and  (\ref{blocks of generic in U}) above.
\\

\noindent{\bf Part (a).}  Let $\mu$ be a partition of $m$, and assume that $1\leq k\leq n$. Let $Q$ be the partition obtained from $\left(\mu, [n]^k\right)$. In $\U_B$, we get Jordan type $Q$ by choosing $H=J_n^k$, $V=J_\mu^T$, and $u=d=0$ in (\ref{blocks of generic in U}) above.

\noindent{\bf Part (b).} Let $\mu=(\mu_1, \mu_2, \ldots, \mu_s)$ be a partition of $m+1$, and assume that $\frac{n-1}{\mu_1}\leq k\leq n-1$. Let $Q$ be the partition obtained from $\left(\mu, [n-1]^k\right)$.

If $\mu=\left(1^{m+1}\right)$, then $Q$ is the partition $\left(1^{m+n}\right)$, which corresponds to the zero matrix in $\U_B$.  For the rest of the proof of this part we assume that $2\leq \mu_1\leq m+1$.

Let $\mu'=(\mu_1-1,\mu_2, \ldots, \mu_s)$. Using the expression (\ref{blocks of generic in U}) for an element $A\in \U_B$, set $H=J_n^k$, $V=J_{\mu'}^T$, $u=\delta e'_{\mu_1-1}$ and $d=e_1'$, where $\{e_1', \ldots, e'_m\}$ is the standard basis of $\mathbb{F}^m$, and $\delta\in \mathbb{F}$. We reorganize the block decomposition of $A$ to get

\[A=\left( 
\begin{array}{c|c}
  J_n^k  & \delta e_1\left(e'_{\mu_1-1}\right)^T \\
  \hline
  e_1'e_n^T   & J_{\mu'}^T
\end{array}
\right)= 
\left( 
\begin{array}{c|c}
  J_{n-1}^k & f_{n-k}\left(f'_1\right)^T +\delta f_1\left(f'_{\mu_1}\right)^T\\
 \hline
  0 & J_\mu^T
\end{array}
\right),\]
where $\{f_1, \ldots, f_{n-1}\}$ and $\{f'_1, \ldots, f'_{m+1}\}$ are the standard bases of $\mathbb{F}^{n-1}$ and $\mathbb{F}^{m+1}$, respectively. 

Thus for a positive integer $l$, we have 

\[A^l=\left(
\begin{array}{c|c}
    J_{n-1}^{kl} & J_{n-1}^{k(l-1)}f_{n-k}(f'_1)^T+\delta f_1\left(f'_{\mu_1}\right)^T (J_\mu^{l-1})^{T}\\
    \hline
    0 & \left(J^l_{\mu }\right)^T
\end{array}
\right).
\]

\noindent {\bf Case 1.} Assume that $\frac{n}{\mu_1}\leq k$. In this case, let $\delta=0$. 

For  $1\leq l<\frac{n}{k}$, since $l<\mu_1$, the block $  \left(J^T_{\mu }\right)^l$ is non-zero and has a row equal to $(f'_1)^T$. Therefore, for such $l$, we have 
\[\rank(A^l)=\rank\left(
\begin{array}{c|c}
    J_{n-1}^{kl} &0 \\
    \hline
    0 & \left(J^l_{\mu }\right)^T
\end{array}
\right). \]

On the other hand, for $l\geq \frac{n}{k}$ we have 
\[A^l=\left(
\begin{array}{c|c}
   0 &0 \\
    \hline
    0 & \left(J^l_{\mu }\right)^T
\end{array}
\right), \]

Therefore, in this case $A$ has the desired Jordan type.

\noindent {\bf Case 2.} Now assume that $\frac{n-1}{\mu_1}\leq k <\frac{n}{\mu_1}$. This is possible only if $n-1$ is divisible by $\mu_1$ and so $n-1=k\mu_1$. Let $\delta=-1$. 

For $1\leq l \leq \mu_1-1$, we have $kl<n-1$, so that the block $J_{n-1}^{kl}$ has a column equal to $f_1$ and the block $\left(J^T_{\mu }\right)^l$ has a row equal to $(f'_1)^T$. Thus, for such $l$, we have 

\[\rank(A^l)=\rank\left(
\begin{array}{c|c}
    J_{n-1}^{kl} &0 \\
    \hline
    0 & \left(J^l_{\mu }\right)^T
\end{array}
\right). \]

For $l=\mu_1$, we have 

\[A^{\mu_1}=\left(
\begin{array}{c|c}
    0 & f_{1}(f'_1)^T- f_1\left(f'_{1}\right)^T \\
    \hline
    0 & 0
\end{array}
\right)=0=\left(
\begin{array}{c|c}
    J_{n-1}^{k\mu_1} & 0 \\
    \hline
    0 & \left(J^{\mu_1}_{\mu }\right)^{T}
\end{array}
\right), 
\]
and hence $A^l=0$ also for $l>\mu_1$.

Therefore, in this case $A$ has the desired Jordan type as well.

\noindent{\bf Part (c).} Assume that $\mu=(\mu_1,\mu_2, \ldots, \mu_s)$ is a partition of $m+2$ such that either $\mu_2=0$ or  $\mu_2\geq 2$. 

If $\mu_2=0$, let $\mu'=(m)$, and let $k$ be such that $\frac{n-1}{m+1}< k \leq n-2$. If $\mu_2\geq 2$, let $\mu'=(\mu_1-1, \mu_2-1, \mu_3, \ldots, \mu_s)$ and  $k$ be such that $\frac{n-1}{\mu_2}< k \leq n-2$.

We first note that if $\mu_1\leq 2$, then in fact $\mu_1=2$ and all parts of the partition obtained from $\left(\mu, [n-2]^k\right)$ are at most 2. We already know that such partition commutes with any partition (see \cite[Thm. 2.4]{Obl-3}). So for the rest of this proof we assume that $\mu_1\geq 3$. 

Let $A\in \U_B$, written as in (\ref{blocks of generic in U}), be such that  $H=J_n^k$, $V=\left(J_{\mu'}\right)^T$, $u=e'_{\mu_1+\mu_2-2}$, and $d=e_1'$. 
In other words, 
 \[A=\left( 
\begin{array}{c|c}
  J_n^k  & e_1\left(e'_{\mu_1+\mu_2-2}\right)^T \\
  \hline
  e_1'e_n^T   & J_{\mu'}^T
\end{array}
\right).\]

In the case $\mu=(m+2)$, for simplicity, we also write $\mu_1+\mu_2$ to mean $m+2$. 

Let $\bar{A}=P^{-1}AP$ where $P$ is the base change matrix permuting the standard basis of $\mathbb{F}^{n+m}$ by cycle  $(1\to 2\, \to \ldots \to n+\mu_1+\mu_2-2)$ and leaves other vectors fixed (if any). We can then reorganize the block decomposition of $\bar{A}$ to write 
\[\bar{A}= 
\left( 
\begin{array}{c|c}
  J_{n-2}^k & f_{n-k-1}\left(f'_1\right)^T \\
 \hline
  f'_{\mu_1+\mu_2}f_k^T & J_\mu^T
\end{array}
\right).\]

Here, $\{f_1, \ldots, f_{n-2}\}$ and $\{f_1', \ldots,f'_{m+2}\}$ are the standard bases of $\mathbb{F}^{n-2}$ and $\mathbb{F}^{m+2}$, respectively. 

Using the notation $\delta_{i,j}$ to denote the Kronecker delta, for a positive integer $l$, we have 

\[\begin{array}{ll}
\bar{A}^l&= 
\left( 
\begin{array}{c|c}
  J_{n-2}^{kl} & J_{n-2}^{k(l-1)}f_{n-k-1}\left(f'_1\right)^T \\
 \hline
  f'_{\mu_1+\mu_2}f_k^TJ_{n-2}^{k(l-1)}& \left(J^l_\mu\right)^T +\delta_{kl,n-1}f'_{\mu_1+\mu_2}(f'_1)^T
\end{array}
\right)
\\ \\
&=
\left( 
\begin{array}{c|c}
  J_{n-2}^{kl} &f_{n-kl-1}\left(f'_1\right)^T \\
 \hline
  f'_{\mu_1+\mu_2}f_{kl}^T& \left(J^l_\mu\right)^T +\delta_{kl,n-1}f'_{\mu_1+\mu_2}(f'_1)^T
\end{array}
\right).
\end{array}\]

We will show that $A$ has the desired Jordan type by showing that for every positive integer $l$, 
\[\rank\bar{A}^l=\rank \left( 
\begin{array}{c|c}
  J_{n-2}^{kl} & 0 \\
 \hline
  0& \left(J^l_\mu\right)^T
\end{array}
\right).\]

\noindent {\bf Case I.} Assume that $\mu_2\geq 2$. If $l$ is such that $1\leq l <\mu_2$, then the $(\mu_1+\mu_2-l)$-th column of $\left(J^l_\mu\right)^T$ is equal to $f'_{\mu_1+\mu_2}$. Therefore, for such $l$, we can use an elementary column operation to remove the non-zero entry in the block $\bar{A}^l_{21}$, which will be in row $\mu_1+\mu_2$ and column $kl$ of the block, as well as the extra term in the block $\bar{A}^l_{22}$, which will be in row $\mu_1+\mu_2$ and column $1$ of the block. Moreover, the $(l+1)$-st row of $\left(J^l_\mu\right)^T$ is equal to $(f'_1)^T$ and therefore a nonzero entry in the block $\bar{A}^l_{12}$, which will be in row $n-kl-1$ and column $1$ of that block, can be eliminated using an elementary row operation. 

On the other hand, for $l\geq \mu_2$, since $kl> n-1$, both blocks $\bar{A}^l_{12}$ and $\bar{A}^l_{21}$, as well as the extra term in the block $\bar{A}^l_{22}$, are 0.
\\

\noindent {\bf Case II.} Now, assume that $\mu=(m+2)$ and that $\frac{n-1}{m+1}< k \leq n-2$. We note that in this case, as long as $l\leq m$, the $(l+1)$-st row of $\left(J^l_\mu\right)^T$ is $(f'_1)^T$, and its $(m+2-l)$-th column is $f'_{m+2}$. Since $l+1\leq m+1$ and $m+2-l\geq 2$, a nonzero entry in $\bar{A}^l_{12}$ or $\bar{A}^l_{21}$, or an additional term in $\bar{A}^l_{22}$ can be eliminated through appropriate elementary column and row operations.

On the other hand, for $l\geq m+1$, we have $lk> n-1$ and therefore both $\bar{A}^l_{12}$ and $\bar{A}^l_{21}$, as well as the extra term in $\bar{A}^l_{22}$ are 0.

Therefore, in both cases, for every positive integer $l$, we have the desired rank conditions, implying that $A$ has Jordan type $([n-2]^k, \mu)$.
\end{proof}

\begin{rem}
In part (c) of Lemma~(\ref{commute with hook}), we made the lower bounds for $k$ sharp, but partitions that come from the inclusive lower bounds for $k$ are also among the commuting partitions covered in earlier parts of Lemma \ref{commute with hook}. If $\mu=(m+2)$ and $k=\frac{n-1}{m+1}$, then $([n-2]^k, m+2)=(m,(m+1)^{k-1},m+2)=([n]^k, m)$, which is already covered in Part (a). On the other hand, if $\mu=(\mu_1, \mu_2, \ldots, \mu_s)$ with $\mu_2\geq 2$, for $k=\frac{n-1}{\mu_2}$, we have $([n-2]^k, \mu)=(\mu_2^{k-1}, \mu_2-1, \mu)=([n-1]^k, \mu_1,\mu_2-1, \ldots, \mu_s)$, which is already covered in case (b) because $\frac{n-1}{\mu_1}\leq \frac{n-1}{\mu_2}\leq k$.
\end{rem}

\begin{thm}\label{all that commute with hook}
    Partition $\Gamma_{n,m} =\left(n,1^m\right)$ commutes with a partition $Q$ of $m+n$ if and only if $Q$ is one of the following partitions:
    \begin{enumerate}
        \item[(a)] $Q$ is the partition obtained from $\left(\mu, [n]^k\right)$, where $\mu$ is an arbitrary partition of $m$ and $1\leq k\leq n$.
        \item[(b)] $Q$ is the partition obtained from $\left(\mu,[n-1]^k\right)$, where $\mu=(\mu_1,\ldots, \mu_s)$ is a partition of $m+1$, and $\frac{n-1}{\mu_1}\leq k\leq n-1$.
        \item[(c)] $Q$ is the partition obtained from $\left(\mu, [n-2]^k\right)$, where either $\mu=(m+2)$ and $\frac{n-1}{m+1}< k \leq n-2$, or $\mu=(\mu_1, \mu_2, \ldots, \mu_s)$ is a partition of $m+2$ such that $\mu_2\geq 2$, and $\frac{n-1}{\mu_2} < k \leq n-2$.
    \end{enumerate}
\end{thm}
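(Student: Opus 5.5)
The ``if'' direction is exactly Lemma~\ref{commute with hook}, so only the ``only if'' direction needs proof. By \cite[Lem. 2.2]{BaIK2010} it suffices to show that every $A\in\U_B$, written as in (\ref{blocks of generic in U}), has Jordan type of the form (a), (b) or (c). If $H=0$ then $A^2$ has very small rank, and a direct rank count places $A$ in (a) or (b). So the main case is when some $h_i\neq 0$.

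\textbf{Step 1: normal form for $H$ and $V$.} Let $k$ be minimal with $h_k\neq0$. I will conjugate $A$ by invertible matrices in the centralizer of $B$, which preserve both the Jordan type of $A$ and the shape (\ref{blocks of generic in U}).
\begin{itemize}
\item A block-diagonal matrix $p(J_n)\oplus I_m$, with $p$ a polynomial and $p(0)\neq0$, lets me assume $H=J_n^k$. This uses $\operatorname{char}\FF=0$ to extract a $k$-th root of $1+\cdots$.
\item A matrix $I_n\oplus S$, with $S\in GL_m$, commutes with $B$. It lets me replace $V$ by $J_\nu^T$ for some partition $\nu$ of $m$, at the cost of replacing $(u,d)$ by $(S^{-T}u,\,Sd)$.
\item The remaining freedom is the centralizer of $V$ in $GL_m$, together with unipotent matrices mixing the $e_1$ and $e_n$ directions with the $m$-part. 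I use it to simplify $u$ and $d$: after normalizing, $d$ should be a single basis vector at the top of one Jordan chain of $\nu$, say of length $a$, and $u$ a multiple of a single basis vector at the bottom of one chain, say of length $b$.
\end{itemize}
The data is then $k$, $\nu$, the two chains selected by $d$ and $u$, and the scalar in front of $u$. The two chains may be the same chain.

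\textbf{Step 2: rank computation.} Write $A=D+E$ with $D=J_n^k\oplus J_\nu^T$ and $E=e_1u^T+de_n^T$, where $E$ has rank at most $2$. Then every correction term in $A^l-D^l$ lies in the span of matrices $x\,y^T$ with $x$ in the $A$-orbit of $e_1$ or $d$, and $y$ in the transposed orbit of $e_n$ or $u$. Following the paper's manipulation in the proof of Lemma~\ref{commute with hook}, I permute the basis so that the selected chains of $\nu$ are glued to the $J_n^k$ block. If only $d$ is nonzero, or only $u$ is nonzero, this gluing shows that $A$ is conjugate to $J_{n-1}^k\oplus J_\mu^T$ plus corner terms, where $\mu$ is obtained from $\nu$ by lengthening the selected chain by one. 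The row and column elimination arguments of Case~1 and Case~2 in that proof then give $\rank A^l$ for every $l$. The outcome is a type of form (a), namely $(\nu,[n]^k)$, or of form (b). If both $u$ and $d$ are nonzero, the gluing produces $J_{n-2}^k$ and a partition $\mu$ of $m+2$, exactly as in the matrix $\bar A$ of part (c), and the same ranks appear.

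\textbf{Step 3: recovering the bounds on $k$; the main obstacle.} The hard part is showing that the inequalities on $k$ in (b) and (c) are forced, rather than merely sufficient. The mechanism is a closed path
\[
e_n\to d\to\cdots\to u\text{-coordinate}\to e_1,
\]
which returns into the $J$-block. In block form this appears as the term $\delta_{kl,n-1}f'_{\mu_1+\mu_2}(f_1')^T$. When $k$ is below the threshold ($k\mu_1<n-1$ in (b), or $k\mu_2\le n-1$ in (c)), this term is not killed by the elimination. It then changes $\rank A^l$ for some $l$, and I will show that this rearrangement merges parts so that the Jordan type again lies in (a) or (b) with different parameters. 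The Remark after Lemma~\ref{commute with hook} illustrates this for the boundary values of $k$. I expect this case analysis to be the bulk of the work: the chain lengths $a,b$ relative to $n/k$, whether the two selected chains coincide, and the choice of scalar on $u$. I would first try to show, by comparing $\rank A^l$ with $\rank D^l$ term by term, that the difference is always $0$, $1$ or $2$ in a monotone pattern. Such a pattern pins down $Q$ as $\mu$ together with an almost rectangular partition of $n$, $n-1$ or $n-2$, and the bounds on $k$ would then drop out of it.
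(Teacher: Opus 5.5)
\textbf{Verdict: there is a genuine gap.} Your reduction to $\U_B$ matches the paper, but the ``only if'' direction is not proved. Its whole content is to determine the Jordan type of $A$ for \emph{all} values of $k$, of the chain lengths and of $\delta$, and Steps~2--3 do not do this. The row and column eliminations you import from Lemma~\ref{commute with hook} are valid only under its hypotheses ($k\mu_1\ge n-1$ in (b); in Case~I of (c) one uses $kl>n-1$ for $l\ge\mu_2$, i.e.\ $k\mu_2>n-1$). They are also valid only for the specific placements of $d,u$ and the specific $\delta$ chosen there. Those inequalities are exactly what the converse has to establish, so your claim in Step~2 that for $u,d\neq 0$ ``the same ranks appear'' is false in general. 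Take $n=10$, $m=2$, $k=1$, $V=0$ (so $\nu=(1,1)$), $d=e_1'$, $u=e_2'$. Gluing gives $J_8$ and $\mu=(2,2)$, which predicts $(8,2,2)$, a partition excluded by the statement you are proving. In fact $Ae_{10}=e_9+e_1'$, $Ae_2'=e_1$ and $Ae_1'=0$. Hence $\rank A^j=10-j$ for $1\le j\le 10$, and $A$ has type $(10,1,1)$. Step~3 recognizes that this has to be sorted out, but it only announces what you ``will show''. The cases left open are: same versus different chains, chain lengths relative to $n/k$, the value of $\delta$, and the coincidences between forms noted in the Remark. So the heart of the theorem is left undone.

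\textbf{The missing idea.} Compare $A$ with the unglued matrix $H\oplus V$, not with $\bar A$. Since $He_1=0$, $e_n^TH=0$ and $e_n^Te_1=0$, for $l\ge 2$ one has
\[
A^l=\begin{pmatrix} H^l+(u^TV^{l-2}d)\,e_1e_n^T & e_1\,u^TV^{l-1}\\ V^{l-1}d\,e_n^T & V^l\end{pmatrix}.
\]
This is the formula the paper uses, after conjugating to $H=J_n^k$, $V=J_\nu$ and leaving $u,d$ arbitrary.
\begin{itemize}
\item While $kl\le n-2$, column $kl+1$ and row $n-kl$ of $A^l$ are nonzero multiples of $e_1$ and $e_n^T$. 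They clear the border, so $\rank A^l=\rank J_n^{kl}+\rank V^l$.
\item For larger $l$ the border has rank at most $2$ and vanishes as soon as $V^{l-2}=0$.
\item Hence the type is $(\nu,[n]^k)$ unless $\nu_1\ge\lceil (n-1)/k\rceil-1$.
\end{itemize}
Any change of type is carried by long parts of $\nu$, and that is where the lower bounds on $k$ in (b) and (c) come from. The paper's split into ``$kl=n-1$ for some $l$'' versus ``no such $l$'' organizes the remaining bookkeeping, and no normalization of $u,d$ is needed.

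This matters because your normalization of $u,d$ to single chain vectors is asserted, not proved. It needs the unipotent mixing to reduce $d$ modulo $\im V$ and $u$ modulo $\im V^T$, and that mixing also shifts $h_{n-1}$.

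Step~1 has two further errors:
\begin{itemize}
\item Conjugating by $p(J_n)\oplus I_m$ cannot change $H$, because $H$ is itself a polynomial in $J_n$. You need $S\oplus I_m$ with $S$ upper triangular but not a polynomial in $J_n$, as the paper does.
\item For $H=0$, $A^2$ does not have small rank, since its lower corner is $V^2$. The conclusion ``(a) or (b)'' is also wrong. With $n=5$, $m=1$, $H=0$, $u=d=1$ one gets $e_5\mapsto e_1'\mapsto e_1\mapsto 0$, and the type is $(3,1^3)$, which lies only in (c).
\end{itemize}
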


\begin{proof}
A generic matrix $A$ in $\U_B$ as in \eqref{generic in U} is written in the block form as in \eqref{blocks of generic in U}. We know that $H$ is similar to $J_n^k$ for some power $k\in\{1,2,\ldots,n\} $ and $V$ is similar to a nilpotent Jordan matrix with Jordan type $\nu=(\nu_1,\nu_2,\ldots,\nu_t)$. Then we have $H=S^{-1} J_n^k S $ and $V=T^{-1}J_{\nu}T $ for two invertible matrices $S$ and $T$. Since $H$ is strictly upper-triangular matrix we may choose $S$ such that $Se_1=\alpha e_1$ and $S^Te_n=\beta e_n$ for two nonzero scalars $\alpha$ and $\beta$. Then, we have
$$
A=\left( 
\begin{array}{c|c}
  S^{-1} J_n^kS   & e_1 u^T \\
  \hline
  d e_n^T   & T^{-1}J_{\nu}T
\end{array}
\right)=\left( 
\begin{array}{c|c}
  S^{-1}   & 0 \\
\hline
0   & T^{-1}
\end{array}
\right)\left( 
\begin{array}{c|c}
  J_n^k   & e_1 a^T \\
  \hline
  b e_n^T   & J_{\nu}
\end{array}
\right)\left( 
\begin{array}{c|c}
  S   & 0 \\
  \hline
  0   & T
\end{array}
\right),
$$
where we write $u=\alpha^{-1}T^Ta $ and $d=\beta T^{-1}b $.
Thus $A$ is similar to 
\[B=\left( 
\begin{array}{c|c}
  J_n^k   & e_1 a^T \\
  \hline
  b e_n^T   & J_{\nu}
\end{array}
\right)\]
and we will consider $B$ hereafter.

In order to find the Jordan type of $B$ we will compute its powers. Inductively, we prove that
$$
B^l=\left( 
\begin{array}{c|c}
  J_n^{kl} + \left(a^TJ_{\nu}^{l-2}b\right) J_n^{n-1}   & e_1 a^T J_{\nu}^{l-1} \\
  \hline
   J_{\nu}^{l-1} b e^T_n  & J_{\nu}^l
\end{array}
\right)
$$
for any $l\ge 2$. As long as $kl\leq n-2$, the off diagonal blocks do not affect the rank of $B^l$ and we have $\rank(B^l)=\rank(J_n^{kl})+\rank(J_{\nu}^l )$.

Consider next the case $kl=n-1$. We omit the first $n-1$ consecutive columns and $n-1$ consecutive rows, starting with the second row, in $B^l$ that are all zero to obtain
\begin{equation}
\label{kl=n-1}
\left( 
\begin{array}{c|c}
  1 + \left(a^TJ_{\nu}^{l-2}b\right)    & a^T J_{\nu}^{l-1} \\
  \hline
   J_{\nu}^{l-1} b   & J_{\nu}^l
\end{array}
\right).
\end{equation}
We consider several cases depending on the largest part $\nu_1$ of partition $\nu$. 

{\bf Case I.} $\nu_1\leq l-2$.

In this case we have $J_{\nu}^{l-2}=0$, and thus $\rank B^{l}=1$. We also have $\rank B^{s}=0$ for all $s>l$. The Jordan type of $B$ is equal to 
$\left(\nu, [n]^k\right)$ where $\nu$ is a partition of $m$ and $1\leq k\leq n$. So it belongs to the case (a) of Lemma \ref{commute with hook}.

{\bf Case II.} $\nu_1= l-1.$
Now, we have $J_{\nu}^{l-1} =J_{\nu} ^l=0$. So, $\rank B^l$ is either $0$ or $1$, depending on $1+a^TJ_{\nu}^{l-2}b$ being equal to $0$ or not, respectively. Suppose first that $\rank B^l=1$. Then, it follows that $\rank B^{s}=0$ for all $s>l$. Hence $\rank(B^l)=\rank(J_n^l)+\rank(J_{\nu}^l )$ for all $l$ and the Jordan type of $B$ is equal to 
$\left(\nu, [n]^k\right)$ where $\nu$ is a partition of $m$ and $1\leq k\leq n$. The Jordan type of $A$ in this subcase belongs to the case (a) of Lemma \ref{commute with hook}.

Suppose next that $\rank B^l=0$. Since $\nu_1=l-1$ and $kl=n-1$ it follows that the Jordan type of $B$ is equal to $\left(\nu', [n-1]^k\right)$, where $\nu'=(\nu_1+1,\nu_2,\nu_3,\ldots, \nu_t)=(l,\nu_2,\ldots,\nu_t)$ and $[n-1]^k=(l^k)$. Since $\nu_1+1=l=\frac{n-1}{k}$, it follows that the Jordan type in this case belongs to type (b) of Lemma \ref{commute with hook}.

{\bf Case III.} $\nu_1\ge l.$

Now the rank of $B^l$ can be equal to one of $3$ possible values: $\rank J_{\nu}^l$, $\rank J_{\nu}^l+1$ or $\rank J_{\nu}^l+2$, depending on $a$ and $b$. We consider the three cases separately.

\begin{itemize}
    \item[(i)] $\rank B^l-\rank J_{\nu}^l=0$: Then $1+a^TJ_{\nu}^{l-2}b=0 $, $J_{\nu}^{l-1}b\in\im J_{\nu}^l $ and $\left(J_{\nu}^{l-1}\right)^Ta\in\im \left(J_{\nu}^l\right)^T $. We have $\rank B^j=n-jk+\rank J_{\nu}^j $ for $j=0,1,\ldots,l-1$, and $\rank B^l=\rank J_{\nu}^l $. Since $n=kl+1$ it follows that the Jordan type of $B$ is equal to $([n-1]^k,\nu' ) $, where $\nu'$ is a partition of $m+1$ and $\nu_1'\ge\nu_1\ge l  = \frac{n-1}{k} $. 

So the Jordan type of $B$ belongs to type (b) of Lemma \ref{commute with hook}.

\item[(ii)] $\rank B^l-\rank J_{\nu}^l=1$: Then $\rank B^j=\rank J_n^{kj}+\rank J_{\nu}^j $ for all $j\leq l$. Since $J_n^{kl+1}=J_n^n=0$, the equality $\rank B^j=\rank J_n^{kj}+\rank J_{\nu}^j$ holds also for all $j> l$.  So, the Jordan type of $B$ is equal to $\left(\nu', [n]^k\right)$ where $\nu'$ is a partition of $m$ and $1\leq k\leq n$. It belongs to the case (a) of Lemma \ref{commute with hook}.

\item[(iii)] $\rank B^l-\rank J_{\nu}^l=2$: Then $\rank B^j=\rank J_n^{kj}+\rank J_{\nu}^j $ for all $j\leq l-1$ and $\rank B^l=\rank J_{\nu}^l+2=\rank J_n^{n-1}+\rank J_{\nu}^l+1$. Therefore, the Jordan type of $B$ is equal to $([n-1]^k,\nu')$, where $\nu'=(\nu'_1,\nu'_2,\ldots,\nu'_t)$ is a partition of $m+1$ and $\nu_1'\ge\nu_1+1\ge l+1\ge \frac{n-1}{k} $. It belongs to the case (b) of Lemma \ref{commute with hook}.

\end{itemize}

It remains to consider the case when there is no $l$ such that $kl=n-1$, i.e., we assume that the remainder of $n$ when divided by $k$ is not equal to $1$. We write $\ell=\left\lfloor{\frac{n}{k}}\right\rfloor$. Then 
$\rank B^j=\rank J_n^{kj}+\rank J_{\nu}^j $ for all $j\leq \ell$. 

\begin{itemize}
    \item[(i)] If $\nu_1\leq\ell$ then $B^j=0$ for $j\ge\ell +1$ and the Jordan type of $B$ is equal to $\left([n]^k,\nu\right)$ where $\nu$ is a partition of $m$ and $1\leq k\leq n$. It belongs to the case (a) of Lemma \ref{commute with hook}.

\item[(ii)] Assume next that $\nu_1\ge\ell+1$, and that $a^TJ_{\nu}^{\ell-1}b=0 $, $J_{\nu}^{\ell}b\in\im J_{\nu}^{\ell +1} $ and $\left(J_{\nu}^{\ell}\right)^Ta\in\im J_{\nu}^{\ell +1} $. Then, we have $\rank B^j=\rank J_n^{kj}+\rank J_{\nu}^j$  for all $j$ and the Jordan type of $B$ is equal to $\left([n]^k,\nu\right)$ where $\nu$ is a partition of $m$ and $1\leq k\leq n$. It belongs to the case (a) of Lemma \ref{commute with hook}.

\item[(iii)] Assume next that $\nu_1\ge\ell+1$, and that $a^TJ_{\nu}^{\ell-1}b\neq 0 $, but $J_{\nu}^{\ell}b\in\im J_{\nu}^{\ell+1}$ and $\left(J_{\nu}^{\ell}\right)^Ta\in\im J_{\nu}^{\ell+1}  $. Then $\rank B^j=\rank J_n^{kj}+\rank J_{\nu}^j $ for all $j\leq \ell$ and $\rank B^{\ell +1} =\rank J_{\nu}^{\ell +1} +1$. Therefore, the Jordan type of $B$ is equal to $([n-1]^k,\nu')$, where $\nu'=(\nu'_1,\nu'_2,\ldots,\nu'_t)$ is a partition of $m+1$ and $\nu_1'\ge\nu_1\ge \ell+1\ge \frac{n-1}{k} $. It belongs to the case (b) of Lemma \ref{commute with hook}.

\item[(iv)] If $\nu_1\ge\ell + 1$ but exactly one of the relations $J_{\nu}^{\ell}b\in\im J_{\nu}^{\ell +1}$ and $\left(J_{\nu}^{\ell}\right)^Ta\in\im J_{\nu}^{\ell +1} $ holds and the other fails, then $\rank B^j=\rank J_n^{kj}+\rank J_{\nu}^j $ for all $j\leq \ell$ and $\rank B^{\ell +1} =\rank J_{\nu}^{\ell +1} +1$. Therefore, the Jordan type of $B$ is equal to $([n-1]^k,\nu')$, where $\nu'=(\nu'_1,\nu'_2,\ldots,\nu'_t)$ is a partition of $m+1$ and $\nu_1'\ge\nu_1\ge \ell+1\ge \frac{n-1}{k} $. It belongs to the case (b) of Lemma \ref{commute with hook}.

\item[(v)] Finally, assume that $\nu_1\ge\ell +1$ and both relations $J_{\nu}^{\ell}b\notin\im J_{\nu}^{\ell +1}$ and $\left(J_{\nu}^{\ell}\right)^Ta\notin\im J_{\nu}^{\ell +1} $ hold. Then $\rank B^j=\rank J_n^{kj}+\rank J_{\nu}^j $ for all $j\leq \ell$ and $\rank B^{\ell +1} =\rank J_{\nu}^{\ell +1} +2$. Therefore, the Jordan type of $B$ is equal to $([n-2]^k,\nu')$, where $\nu'=(\nu_1,\nu_2,\ldots,\nu_s)$ is a partition of $m+2$. If $\nu=(m)$ then $s=1$ and $\nu'=(m+2)$. If $\nu_2\ge 1$ then $\nu'$ is a partition of $m+2$ such that $\nu_2'\ge \ell+1\ge 2$ and $\frac{n-1}{\nu'_2}\leq k \leq n-2$. This case belongs to the case (b) or (c) of Lemma \ref{commute with hook}.
\end{itemize}
\end{proof}

\section{Examples and consequences}\label{consequences}

In this section, we give some examples that illustrate our main result. We also discuss a few implications it has to the general question of which pairs of partitions are commuting Jordan types.

\begin{exmp} We choose $N=6$. Then there are three hook partitions with $n\ge 3$ and $m\ge 1$. These are $(5,1)$, $(4,1^2)$ and $(3,1^3)$. We list all partitions that commute with each of the three hook Jordan types except for the universally commuting partitions $(2^k,1^l)$ (see \cite[Thm. 2.4]{Obl-3} or \cite[Thm. 4.11]{BW}). In the table below, we group partitions commuting with a given hook partition  by their types described in parts (a), (b), and (c) of Lemma \ref{commute with hook}, respectively. Note that the same partition might occur in more than one column. In the last column, we list all the partitions that do not commute with a given hook partition.
\begin{center}
\begin{tabular}{|c||c|c|c|c|}
\hline
the hook  & type (a)   & type (b)   & type (c) & non-commuting \\
\hline \hline
$(5,1)$& $(5,1)$, $(3,2,1)$  &  &  $(3,1^3)$ & $(6)$, $(3,3)$ \\
\hline
$(4,1^2)$ & $(4,2)$, $(4,1^2)$  & $(3,3)$, $(3,2,1)$, &  & $(6)$, $(5,1)$ \\
& & $(3,1^3)$ & & \\
\hline
$(3,1^3)$ & $(3,3)$, $(3,2,1),$  &  $(4,2)$, $(4,1^2)$, & $(5,1)$ & $(6)$ \\
& $(3,1^3)$  & $(3,2,1)$ & & \\
\hline \hline
\end{tabular}
\end{center}

Observe that there are $11$ partitions of $N=6$. Of those, there are $4$ partitions that are universally commuting and $3$ Rogers-Ramanujan partitions. They form three pairs of non-commuting partitions. The one-part partition $(6)$ commutes only with $6$ almost rectangular partitions, so it forms non-commuting pairs with $5$ other partitions. It is not difficult to show that the only other pair of non-commuting partitions is $((3,3),(5,1))$ and that partitions $(4,2)$ and $(3,2,1)$ commute with $(3,3)$. (Here we refer to \cite[Thm. 3.1]{Obl-3} for the fact that $(3,3)$ and $(4,2)$ commute.) All together, there are $46$ commuting pairs of distinct partitions and $9$ non-commuting pairs of distinct partitions of $N=6$.

\end{exmp}

\begin{cor}
    A partition $P$ of $m+3$ commutes with $\Gamma_{3,m} =\left(3,1^m\right)$ if and only if the smallest part of $P$ is at most 3. Equivalently, the only partitions of $m+3$ that do not commute with $(3,1^m)$ are those with all parts greater than or equal to $4$.
\end{cor}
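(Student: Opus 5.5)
We specialize Theorem~\ref{all that commute with hook} to $n=3$ and prove the two directions separately. With $n=3$ the almost rectangular pieces are few. In (a), $[3]^k$ is $(3)$, $(2,1)$ or $(1^3)$ for $k=1,2,3$. In (b), $[2]^k$ is $(2)$ or $(1^2)$ for $k=1,2$. In (c), $[1]^k=(1)$ with $k=1$. So every partition listed in Theorem~\ref{all that commute with hook} contains a part equal to $1$, $2$ or $3$. Its smallest part is therefore at most $3$, which gives the ``only if'' direction at once.

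For the converse, let $P$ be a partition of $m+3$ whose smallest part $p$ satisfies $p\le 3$. We split into cases according to $p$.

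\textbf{Case $p=3$.} Remove one part $3$ from $P$. What remains is a partition $\mu$ of $m$, and $P=(\mu,[3]^1)$, so $P$ is of type (a) with $k=1$.

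\textbf{Case $p=2$.} Remove one part $2$, leaving a partition $\mu$ of $m+1$. We want type (b) with $k=1$, which needs $\frac{n-1}{\mu_1}=\frac{2}{\mu_1}\le 1$, that is $\mu_1\ge 2$. This holds because every part of $P$ is at least $2$ and $\mu$ is nonempty when $m\ge1$.

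\textbf{Case $p=1$.} Remove one part $1$, leaving a partition $\mu$ of $m+2$, and aim for type (c) with $k=1$. If $\mu=(m+2)$, the condition $\frac{2}{m+1}<1$ holds for $m\ge 2$. If $\mu$ has at least two parts with $\mu_2\ge 2$, then $\frac{2}{\mu_2}<1$ requires $\mu_2\ge 3$.

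The remaining subcases, where type (c) does not apply directly, are handled by other choices. When $\mu_2\le 2$, the part structure of $P$ near the bottom lets us use type (a) or (b) instead. If $P$ contains a part $3$, apply type (a) with $k=1$. If $P$ contains a part $2$, take $[3]^2=(2,1)$: since $P$ contains both a $2$ and a $1$, removing them leaves a partition of $m$, which is type (a) with $k=2$. If $P$ contains two parts equal to $1$, apply type (b) with $k=2$, using $[2]^2=(1,1)$; the bound $\frac{2}{\mu_1}\le 2$ always holds.

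The only remaining case is $P=(\lambda_1,\dots,\lambda_{s},1)$ with a single part $1$ and all other parts at least $4$. Then $\mu=(\lambda_1,\dots,\lambda_s)$. If $s\ge2$, then $\mu_2\ge4\ge3$ and type (c) applies. If $s=1$, then $\mu=(m+2)$, and type (c) needs $m\ge2$. When $m=1$ we get $P=(3,1)$, which contains a part $3$ and so was already handled.

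The main obstacle is this bookkeeping in the case $p=1$: the thresholds in (b) and (c) must be checked carefully against the leftover parts, and each small exceptional configuration must be routed to a different one of the types (a), (b), (c). I would organize the $p=1$ case around which of the values $1,2,3$ occur among the parts of $P$ and with what multiplicity.

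Finally, the second sentence of the corollary is just the contrapositive restatement: the smallest part is greater than $3$ exactly when all parts are at least $4$.
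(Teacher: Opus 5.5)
Your proof is correct and follows the paper's own argument: you specialize Theorem~\ref{all that commute with hook} to $n=3$, observe that every listed partition has a part in $\{1,2,3\}$ for the ``only if'' direction, and route each $P$ with smallest part at most $3$ to type (a), (b) or (c) by cases on the smallest part. The only differences are cosmetic. You organize the smallest-part-$1$ case by whether a $3$, a $2$, or a repeated $1$ occurs, whereas the paper splits on $\lambda_2$ and the number of parts. You also absorb $m=1$ into the general argument instead of treating it separately.
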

\begin{proof}
For $m=1$, the statement is clearly true as the only partition of 4 that does not commute with $(3,1)$ is $(4)$. 

Assume that $m\geq 2$, and let $P=(\lambda_1, \ldots, \lambda_r)$ be a partition of $m+3$ such that $\lambda_r\leq 3$. 
     
     If $\lambda_r=3$, then $(\lambda_1, \ldots, \lambda_{r-1})$ is a partition of $m$ and therefore, $P$ is a partition of the form given in part (a) of Lemma~\ref{commute with hook}.
     
     If $\lambda_r=2$, then $(\lambda_1, \ldots, \lambda_{r-1})$ is a partition of $m+1$. Since $\lambda_1\geq 2$, we have $\frac{3-1}{\lambda_1}\leq 1$, and therefore $P$ is a partition of the form described in part (b) of  Lemma~\ref{commute with hook}.
     
       If $\lambda_r=1$, then $(\lambda_1, \ldots, \lambda_{r-1})$ is a partition of $m+2$. If $r-1=1$, then $\lambda=(m+2,1)$. Since  $\frac{2}{m+1}<1$, in this case $P$ is a partition of the form described in part (c) of  Lemma~\ref{commute with hook}. Now assume that $r-1\geq 2$. In this case, if $\lambda_2> 2$, then $P$ is a partition of the form described in part (c) of  Lemma~\ref{commute with hook}, if $\lambda_2=2$, then $P$ is a concatenation of a partition of $m$ and $(2,1)=[3]^2$, which is a partition of the form described in part (a) of Lemma~\ref{commute with hook}. Finally, if $\lambda_2=1$, then $P=(\lambda_1, 1^{r-1})$ which is either $(m+1,1^2)$, when $r-1=2$, or a concatenation of a partition of $m$ and $[3]^3$. Thus, $P$ is a partition of the form described in part (b) or (a) of Lemma~\ref{commute with hook}.

 Finally, when $\lambda_r \geq 4$, by Theorem~\ref{all that commute with hook},  $P$ does not commute with $(3,1^m)$.
\end{proof}

The following example shows that partitions with the same generic commuting Jordan type do not necessarily commute themselves. In other words, there are noncommuting partitions $\Gamma$ and $P$ such that $\mathcal{D}(\Gamma)=\mathcal{D}(P)$.

\begin{exmp}\label{example.5}
    For a given Rogers-Ramanujan partition $Q=(p,q)$, $p\ge q+2$, there are two hook partitions in its inverse image under $\D$. These are $\Gamma_{p,q}=(p,1^q)$ and $\Gamma_{q+2,p-2} = (q+2,1^{p-2})$. 
    
First, we show that a hook partition of type $\Gamma_{p,q}$ does not commute with every other partition in the table $\D^{-1}(Q)$ in general.    

To begin, consider partitions $\Gamma=\Gamma_{8,2} =(8,1^2)$ and $P=(3^2,1^4)$. Generically, both $\Gamma$ and $P$ commute with $Q=(8,2)$. However, $P$ does not commute with $\Gamma$. This follows by Theorem \ref{all that commute with hook}. A partition commuting with $\Gamma$ has to contain an almost rectangular subpartition of either $8$, $7$ or $6$. Subpartition $(3^2)$ is the only such in $P$. It is now easy to check that the remaining subpartition $(1^4)$ of $P$ does not satisfy conditions for $\mu$ in part (c) of Theorem \ref{all that commute with hook}.

Next, consider all partitions in $\mathcal{D}^{-1}(13,3)$ (see Table 1.1 of \cite{IKvSZ} for a complete list). There are two hook partitions in $\mathcal{D}^{-1}(13,3)$, namely $\Gamma_{13,3}$ and $\Gamma_{5,11}$. While $\Gamma_{5,11}$ commutes with all other partitions in $\mathcal{D}^{-1}(13,3)$, the hook partition $\Gamma_{13,3}$ does not commute with the following set of partitions in $\mathcal{D}^{-1}(13,3)$:
    $$\left\{([7]^2,[9]^k)\mid 5\leq k \leq 9\right \}\cup \left\{([9]^3,[7]^k)\mid 6\leq k \leq 7\right\}.$$
This follows from Theorem~\ref{all that commute with hook}.

 Finally, we show that even a hook partition of the second type $\Gamma_{q+2,p-2}$ need not commute with every other partition in the table $\D^{-1}(Q)$ in general. The partition $\Gamma_{q+2,p-2}$ is a partition with $p-1$ parts which is the maximal number of parts of any partition in the table $\D^{-1}(Q)$.

    Consider partition $Q=(55,24)$. Then $\Gamma_{26,53}$ is a hook partition of the second type as described above. Partition $P=(10^3,7^7)$ is also a partition with $\D(P)=Q=(55,24)$. (These follows from \cite[Thm. 13]{Obl-1}.) Observe that by Theorem \ref{all that commute with hook} each partition that commutes with $\Gamma_{n,m}$ contains an almost rectangular subpartition of either $n$, $n-1$ or $n-2$. Partition $P$ has no almost rectangular subpartition of either $26$, $25$ or $24$. Therefore, it does not commute with $\Gamma_{26,53}$.
\end{exmp}

\bibliographystyle{plain}

\end{document}